\DeclareMathOperator{\dom}{dom}
\newtheorem{prop}{Proposition}[section]
\newtheorem{theo}[prop]{Theorem}
\newtheorem{corol}[prop]{Corollary}
\title{Unfriendly partitions when avoiding vertices of finite degree}
\author{Leandro Fiorini Aurichi$^*$ \and Lucas Silva Sinzato Real\footnote{Instituto de Ci\^{e}ncias Matem\'{a}ticas e de Computa\c{c}\~{a}o, Universidade de São Paulo, S\~{a}o Carlos, S\~{a}o Paulo, Brazil}}
\date{April 2023}
\begin{document}
\maketitle
\begin{abstract}
An unfriendly partition of a graph $G = (V,E)$ is a function $c: V \to 2$ such that $|\{x\in N(v): c(x)\neq c(v)\}|\geq |\{x\in N(v): c(x)=c(v)\}|$ for every vertex $v\in V$, where $N(v)$ denotes its neighborhood. It was conjectured by Cowen and Emerson \cite{cowen} that every graph has an unfriendly partition, but Milner and Shelah in \cite{shelah_milner_1990} found counterexamples for that statement by analyzing graphs with uncountably many vertices. Curiously, none of their graphs have vertices with finite degree. Therefore, as a natural direction to approach, in this paper we search for the least cardinality of a graph with that property that admits no unfriendly partitions. Actually, among some other independence results, we conclude that this size cannot be determined from the usual axioms of set theory. 
\end{abstract}

\section{Introduction and problem background:}\label{s1}
\paragraph{}

For a given graph $G = (V,E)$, the neighborhood of a vertex $v\in V$ will be denoted by $N(v) = \{u\in V : uv \in E\}$, so that its degree is the cardinal $|N(v)|$. Besides that, a function $c : D \to 2$ is called a \textbf{partial coloring} of the graph, where $2 = \{0,1\}$ is a set of two elements (called \textbf{colors}) and $D\subset V$ is any subset. In this paper, we say that a partial coloring $c: D \to 2$ is \textbf{unfriendly} in a vertex $v\in D$ if $|\{x\in N(v)\cap D : c(x) \neq c(v)\}|\geq |\{x\in N(v)\cap D: c(x)=c(v)\} \cup (D\setminus N(v))|$. Then, $c$ is said to be an \textbf{unfriendly partial coloring} if it is unfriendly in all the vertices of its domain. With that definition, it follows directly that any extension $c' : D' \to 2$ of $c$ is unfriendly in a vertex $v\in D$ for which $c$ is unfriendly. Finally, an \textbf{unfriendly partition} is an unfriendly coloring $c: D \to 2$ defined in the whole graph, that is, $D= V$.

In that terms, we first highlight that every finite graph admits an unfriendly partition. This because, if $G = (V,E)$ is such a graph, an unfriendly  partition can be taken to be a coloring $c: V \to 2$ that maximizes the cardinality $|\{uv \in E : c(u)\neq c(v)\}|$. Consequently, compactness arguments verify that every locally finite graph (i.e., whose vertices have all finite degree) has also an unfriendly partition. Actually, as shown by Aharoni, Milner and Prikry in \cite{Aharoni1990}, every graph with only finitely many vertices of infinite degree admits an unfriendly partition.

Supported by these observations, it is natural to ask if every graph has an unfriendly partition, as done by Cowen and Emerson in \cite{cowen}. In 1990, Milner and Shelah in \cite{shelah_milner_1990} answered that problem negatively by exhibiting a family of uncountable graphs in which these colorings cannot be found. Further than being the only known graphs that admit no unfriendly partitions, their counterexamples have some other peculiarities. For example, the smallest member of this family has $\mathfrak{c}^{+\omega}$ vertices, none of them of finite degree. Under that notation, $\mathfrak{c}^{+\omega}$ refers to the first limit cardinal greater than $\mathfrak{c}$.

In particular, even if the Continnum Hypothesis ($\mathrm{CH}$) is assumed, $\aleph_{\omega}$ vertices are needed to describe the least known graph that has no unfriendly partition. Actually, Milner and Shelah also verified in \cite{shelah_milner_1990} that this amount can be obtained in some theories where the continuum has no fixed size. On the other hand, by previous results in the literature that we will revisit on Section \ref{apendice}, a counterexample with fewer than $\aleph_{\omega}$ vertices cannot be constructed when vertices of finite degree are forbidden. Hence, in this paper we investigate if, indeed, $\mathrm{ZFC}$ proves that $\aleph_{\omega}$ is the least cardinal $\varkappa$ such that there is a graph $G = (V,E)$ with $\varkappa$ vertices, all them of infinite degree, having no unfriendly partition.

More than that, we work under $\mathrm{ZFC} + \aleph_{\omega}<\mathfrak{c}$ to discuss whether the statements $\varkappa = \aleph_{\omega}$ and $\varkappa = \mathfrak{c}^{+\omega}$ hold. In this stronger theory, we observe that the cardinals $\aleph_{\omega}$ and $\mathfrak{c}^{+\omega}$ are distinct. Even so, we prove the following independence assertions as our main result:       

\begin{theo}
	\label{nosso}
	Let $\varkappa$ be the least cardinal such that there is a graph $G = (V,E)$ without vertices of finite degree, with $|V| = \varkappa$ and that admits no unfriendly partition. Then, the following statements are independent from $\mathrm{ZFC}+\aleph_{\omega}<\mathfrak{c}$:
	\begin{enumerate}
		\item $\varkappa = \aleph_{\omega}$.
		\item $\varkappa = \mathfrak{c}^{+\omega}$, where, as before, $\mathfrak{c}^{+\omega}$ is the least limit cardinal greater than $\mathfrak{c}$. 
	\end{enumerate}
	In particular, those statements are independent from the usual axioms of set theory.
\end{theo}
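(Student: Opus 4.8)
The plan is to reduce both independence assertions to the construction of two models of the base theory $T := \mathrm{ZFC} + \aleph_\omega < \mathfrak{c}$, exploiting the two bounds already on the table: the Milner--Shelah graph gives $\varkappa \leq \mathfrak{c}^{+\omega}$ in $\mathrm{ZFC}$, and the results revisited in Section~\ref{apendice} give $\varkappa \geq \aleph_\omega$ in $\mathrm{ZFC}$. Under $T$ one has $\aleph_\omega < \mathfrak{c} < \mathfrak{c}^{+\omega}$, so the two candidate values are distinct and the statements $(1)$ and $(2)$ are mutually exclusive. It therefore suffices (modulo $\mathrm{Con}(\mathrm{ZFC})$) to produce a model $A \models T + \varkappa = \aleph_\omega$ and a model $B \models T + \varkappa = \mathfrak{c}^{+\omega}$: then $A$ witnesses $\mathrm{Con}(T + (1))$ and, since there $\varkappa = \aleph_\omega \neq \mathfrak{c}^{+\omega}$, also $\mathrm{Con}(T + \neg(2))$, while $B$ dually witnesses $\mathrm{Con}(T+(2))$ and $\mathrm{Con}(T + \neg(1))$. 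As $A$ and $B$ are models of $\mathrm{ZFC}$, the same two models give independence over plain $\mathrm{ZFC}$.

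For $A$ I want a graph of size $\aleph_\omega$, with no finite-degree vertex and no unfriendly partition, in a universe where $\aleph_\omega < \mathfrak{c}$. The natural starting point is a model $W \models \mathrm{CH}$, where $\mathfrak{c} = \aleph_1$ and hence $\mathfrak{c}^{+\omega} = \aleph_{1+\omega} = \aleph_\omega$, so that the Milner--Shelah graph $G$ already has exactly $\aleph_\omega$ vertices, all of infinite degree, and no unfriendly partition. One cannot simply rerun the construction under $T$ itself: if $\aleph_\omega < \mathfrak{c}$ then $\aleph_n^{\aleph_0} \geq \mathfrak{c} > \aleph_n$ for every $n < \omega$, so the cardinal-arithmetic input that lets the construction terminate at $\aleph_\omega$ is gone. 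Instead I would force over $W$ with a cardinal-preserving poset $\mathbb{P}$ — adding enough Cohen reals to make $\mathfrak{c} > \aleph_\omega$ — keeping $G$ and every cardinal intact, and then argue that $G$ still admits no unfriendly partition in $W[\mathbb{P}]$. With the Section~\ref{apendice} lower bound this gives $\varkappa = \aleph_\omega$ together with $\aleph_\omega < \mathfrak{c}$, as required of $A$.

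For $B$ I would instead inflate the continuum while keeping the arithmetic above it as thin as possible. Forcing $\aleph_{\omega+2}$ Cohen reals over a model of $\mathrm{GCH}$ yields $\mathfrak{c} = \aleph_{\omega+2} > \aleph_\omega$ with $2^{\lambda} = \lambda^{+}$ for every $\lambda \geq \mathfrak{c}$ and $2^{\lambda} = \mathfrak{c}$ for $\aleph_0 \leq \lambda < \mathfrak{c}$. In such a universe the Milner--Shelah family has no member below $\mathfrak{c}^{+\omega}$, since a counterexample of size $\mu^{+\omega}$ requires $\mu^{\aleph_0} = \mu$, forcing $\mu \geq \mathfrak{c}$ and hence $\mu^{+\omega} \geq \mathfrak{c}^{+\omega}$. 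To turn this into the genuine statement $\varkappa = \mathfrak{c}^{+\omega}$ I must establish the positive half of the theory in this model: every graph with no finite-degree vertex and fewer than $\mathfrak{c}^{+\omega}$ vertices admits an unfriendly partition. Combined with $\varkappa \leq \mathfrak{c}^{+\omega}$ this pins $\varkappa = \mathfrak{c}^{+\omega}$, again with $\aleph_\omega < \mathfrak{c}$.

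The main obstacle is the preservation step for $A$. The forcing $\mathbb{P}$ introduces new $2$-colorings of $G$, and I must rule out that any of them is unfriendly; note that \emph{having} an unfriendly partition is upward absolute under cardinal-preserving forcing, so it is precisely the surviving of its negation that needs protection. My plan is to reread the Milner--Shelah proof that $G$ has no unfriendly partition and verify that it is a local, coloring-by-coloring combinatorial argument depending only on the preserved cardinal skeleton $\aleph_1 < \aleph_2 < \cdots$ underlying $G$, and not on the ground-model coincidence $\aleph_1 = 2^{\aleph_0}$; should a bare absoluteness argument not suffice, I would instead use the countable chain condition to reflect a $\mathbb{P}$-name for an unfriendly partition down to a ground-model one, contradicting the choice of $G$. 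The secondary load is the positive theorem for $B$, which has to cover the whole interval $[\aleph_\omega, \mathfrak{c}^{+\omega})$ across the mixed continuum function of the Cohen extension — where, by monotonicity of $\lambda \mapsto 2^{\lambda}$, $\mathrm{GCH}$ necessarily fails below $\mathfrak{c}$ — and which I expect to prove by induction on $|V|$, extending the mechanism behind the $\aleph_\omega$ lower bound of Section~\ref{apendice}.
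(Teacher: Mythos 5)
Your top-level reduction --- build one model of $T+\varkappa=\aleph_\omega$ and one of $T+\varkappa=\mathfrak{c}^{+\omega}$, then note that under $T$ the two values are distinct so each model also refutes the other statement --- is exactly the paper's logical skeleton. The gaps are in the two model constructions, and in both cases the step you defer is the entire mathematical content. For model $A$, everything rests on the unproved claim that the Milner--Shelah graph $G$ of Theorem \ref{contraexemplozfc} still has no unfriendly partition after Cohen forcing pushes $\mathfrak{c}$ above $\aleph_\omega$. Neither of your suggested routes can close this. The Milner--Shelah proof is a counting/diagonalization argument whose engine is cardinal arithmetic of the form $\kappa^{\aleph_0}=\kappa$ for the levels $\kappa=\mathfrak{c}^{+n}$ of $G$ (the construction anticipates all relevant countable traces of colorings in advance); after the forcing, a single countable set already has $2^{\aleph_0}>|V(G)|$ colorings, so the proof is not ``local'' and does not transfer. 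Worse, the fallback --- using ccc-ness to reflect a name for an unfriendly partition to a ground-model one --- is provably not a valid principle: the standard ccc iteration forcing $\mathrm{MA}+\aleph_\omega<\mathfrak{c}$ over that same ground model \emph{does} add an unfriendly partition of $G$, by the paper's own Proposition \ref{martin}. So any argument would have to exploit special features of Cohen forcing, and whether Cohen forcing adds an unfriendly partition of $G$ is left completely open by your proposal. The paper sidesteps preservation entirely: it quotes Milner--Shelah's Theorem \ref{consistente} (a p-point generated by $\aleph_1$ sets yields a counterexample of size $\aleph_\omega$), cites the known consistency of that hypothesis with $\aleph_\omega<\mathfrak{c}$, and combines this with the ZFC bound $\varkappa\geq\aleph_\omega$ of Corollary \ref{desigualdadetrivial}.

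For model $B$ the missing piece is the positive theorem that in your Cohen extension every graph with fewer than $\mathfrak{c}^{+\omega}$ vertices, all of infinite degree, has an unfriendly partition; your plan to get it ``by induction, extending the mechanism behind the $\aleph_\omega$ lower bound'' cannot work. That mechanism, Theorem \ref{finitoscardinais}, applies only when the infinite degrees lie in a \emph{finite} set of cardinals, regular above the least one; this is automatic for graphs of size $<\aleph_\omega$, but a graph of size in $[\aleph_\omega,\mathfrak{c})$ in your model can have degrees ranging over infinitely many cardinals, including singulars such as $\aleph_\omega$, and Theorem \ref{consistente} shows that no ZFC argument can cover even the single size $\aleph_\omega$. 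Hence the interval $[\aleph_\omega,\mathfrak{c})$ must be handled by an extra axiom true in the model, and the Cohen extension supplies none ($\mathrm{MA}_{\aleph_1}$ fails there); whether the positive statement holds in the Cohen model appears to be open. This is precisely why the paper takes $B$ to be a model of $\mathrm{MA}+\aleph_\omega<\mathfrak{c}$: its new result, Proposition \ref{martin}, uses $\mathrm{MA}$ to settle all sizes $<\mathfrak{c}$; $\mathrm{MA}$ also makes $\mathfrak{c}$ regular; and the rewritten Aharoni--Milner--Prikry theorem (Theorem \ref{adaptado}) then climbs through the regular successors $\mathfrak{c}^{+n}$ to give $\varkappa\geq\mathfrak{c}^{+\omega}$, matching the ZFC upper bound of Theorem \ref{contraexemplozfc}. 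In short, your outline correctly identifies what must be produced, but the two facts it defers --- Cohen preservation of the counterexample, and the positive theorem in a Cohen model --- are not established anywhere, and the strategies you sketch for them would fail; the paper replaces both with citable consistency results and MA-based arguments.
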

 
 Hence, Theorem \ref{nosso} is an independence result that arises from a problem related to Infinite Graphs. Lying in the intersection between Graph and Set Theory, thus, this paper is written detailing as many arguments as possible, in an attempt to be self-contained for readers from both (but not limited to these) areas. Therefore, depending on them, some proofs from Section \ref{apendice} may seem longer than they could be.

In any case, $\varkappa$ is far from being $\aleph_0$, so that the Unfriendly Partition Conjecture is now the question whether every countable graph has an unfriendly partition. However, even if $\aleph_0 < \kappa < \aleph_{\omega}$ or $\kappa = \mathfrak{c}$, counterexamples of size $\kappa$ are also unknown. According to Theorem \ref{nosso}, incidentally, such a graph $G$ must have at least $\min\{|N(v)| : v \in V(G), |N(v)|\geq \aleph_0\}$ vertices of finite degree.

\section{Possible values for $\varkappa$}\label{apendice}
\paragraph{}
In this section, we present the consistence results of Theorem \ref{nosso}, related to the existence of unfriendly partitions in graphs that have no vertices of finite degree. As pointed out in the introduction, this is motivated by the constructions of Milner and Shelah in \cite{shelah_milner_1990} of the only known graphs admitting no unfriendly partitions. Below, we recover the properties of these graphs:

\begin{theo}[\cite{shelah_milner_1990}, Theorem 2]
	\label{contraexemplozfc}
	Given an infinite cardinal $\lambda$, let $\alpha$ be the ordinal such that $|\wp(\lambda)| = 2^{\lambda} = \aleph_{\alpha}$. Then, there is a graph $G = (V,E)$ without vertices of finite degree, having no unfriendly partitions and such that $|V| = (2^{\lambda})^{+\omega}$, where $(2^{\lambda})^{+\omega} = \aleph_{\alpha+\omega}$. 
\end{theo}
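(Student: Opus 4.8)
The plan is to realize the required graph as an $\omega$-indexed tower of combinatorial gadgets, exploiting the hypothesis $2^{\lambda} = \aleph_{\alpha}$ to supply a sufficiently rich base. First I would fix the cardinals $\mu_n := \aleph_{\alpha+n}$ for $n<\omega$, so that $\mu_0 = 2^{\lambda}$, each $\mu_{n+1} = \mu_n^{+}$, and $\sup_{n<\omega}\mu_n = \aleph_{\alpha+\omega} = (2^{\lambda})^{+\omega}$. The vertex set will be a disjoint union $V = \bigcup_{n<\omega} L_n$ of levels with $|L_n|$ of order $\mu_n$; since the $\mu_n$ form a strictly increasing sequence of infinite cardinals, $|V| = \sum_{n<\omega}\mu_n = \aleph_{\alpha+\omega}$ automatically, which settles the cardinality requirement. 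The role of the equality $2^{\lambda} = \mu_0$ is that $\mu_0^{\lambda} = \mu_0$, leaving enough room to index the neighborhoods appearing at the base level by subsets of $\lambda$ (equivalently by functions $\lambda \to 2$), which is what makes the base gadget function.

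The heart of the construction is a forcing gadget. The starting observation is that the unfriendly condition at a vertex $v$ of infinite degree $\kappa$ can fail \emph{only} when the same-coloured neighbours strictly outnumber the opposite-coloured ones as cardinals, that is, when $v$ has $\kappa$ neighbours of its own colour but strictly fewer than $\kappa$ of the other. So the entire point is to build $G$ so that no $2$-colouring can avoid producing such a vertex. I would engineer, between consecutive levels $L_n$ and $L_{n+1}$, a bipartite incidence pattern (with auxiliary edges inside the levels) so that any partial colouring of $L_n$ forces, at some vertex of $L_{n+1}$, that its opposite-coloured neighbourhood has cardinality $<\mu_{n+1}$ while its same-coloured neighbourhood has cardinality $\mu_{n+1}$ --- unless the colouring of $L_{n+1}$ is itself ``corrected'', which in turn pushes the same defect up to level $L_{n+2}$. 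The key propagation lemma would state that an unfriendly colouring of the subgraph induced by $L_0\cup\cdots\cup L_n$ can be extended to one that is unfriendly at every vertex of those levels only at the cost of a measurable \emph{defect} that strictly increases with $n$.

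Finally, I would derive the nonexistence of an unfriendly partition by an infinite-descent, no-top-level argument: supposing $c:V\to 2$ were unfriendly, successive applications of the propagation lemma assign to each level $n$ a defect quantity that cannot be continued coherently through all of $\omega$, precisely because there is no maximal level to absorb it. Concretely, this produces a vertex at which the same-coloured neighbourhood has strictly greater cardinality than the opposite-coloured one, contradicting unfriendliness there. Throughout, I would make every vertex adjacent to a cofinal infinite set of vertices in the neighbouring levels, so that every degree is infinite and no vertex of finite degree is introduced.

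I expect the main obstacle to be the design of the gadget itself --- producing a finite-degree-free bipartite pattern whose unfriendliness constraints are simultaneously (a) genuinely forced by infinite cardinal arithmetic and (b) able to propagate monotonically across the $\omega$ levels without ever stabilizing --- together with the bookkeeping needed to keep all degrees infinite and the total size pinned to exactly $\aleph_{\alpha+\omega}$. Balancing (a) and (b) is delicate, since strengthening the constraints enough to guarantee propagation tends to create vertices where the condition becomes trivially satisfiable, and conversely weakening them to control degrees risks breaking the forcing.
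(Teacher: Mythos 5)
This statement is quoted in the paper from Milner and Shelah's original article and is not proved there, so your attempt has to stand on its own as a reconstruction of their argument --- and as it stands, it is a plan rather than a proof. The only part you actually carry out is the trivial cardinality bookkeeping: choosing $\mu_n = \aleph_{\alpha+n}$, taking $V = \bigcup_{n<\omega} L_n$ with $|L_n| = \mu_n$, and noting $|V| = \aleph_{\alpha+\omega}$. The two objects that constitute the entire mathematical content of the theorem --- the edge relation between (and inside) the levels, and the ``propagation lemma'' asserting that any $2$-colouring is forced to fail somewhere --- are never defined or stated, let alone proved. Your own closing paragraph concedes that designing the gadget so that the unfriendliness constraints are ``genuinely forced by infinite cardinal arithmetic'' is the main obstacle; that obstacle is the theorem. (Your one concrete observation is fine: for a vertex of infinite degree $\kappa$ under a total colouring, unfriendliness fails exactly when the same-coloured neighbourhood has size $\kappa$ and the opposite-coloured one has size $<\kappa$. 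But nothing is built on it.)

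Beyond the missing construction, the proposed contradiction scheme is logically defective even as a skeleton. You want to conclude by assigning to each level a ``defect that strictly increases with $n$'' and declaring that this ``cannot be continued coherently through all of $\omega$.'' A strictly increasing $\omega$-sequence of defects is not, by itself, contradictory --- increasing $\omega$-sequences exist in any unbounded partial order. Infinite-descent arguments need a quantity taking values in a well-founded order that is forced to \emph{decrease} strictly, or else a counting argument that exhausts a set of bounded cardinality; ``there is no maximal level to absorb it'' is a restatement of the desired conclusion, not a reason. This is precisely where the hypothesis $2^{\lambda} = \aleph_{\alpha} = \mu_0$ must do real work in Milner and Shelah's proof (every colouring of a $\lambda$-indexed structure is one of only $2^{\lambda} < \mu_n$ possibilities, enabling pigeonhole/pressing-down arguments at each level), whereas in your sketch it appears only in the cosmetic remark that $\mu_0^{\lambda} = \mu_0$. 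To turn this into a proof you would need to exhibit the bipartite pattern explicitly, state the propagation lemma with a precise, well-founded measure of defect, and show how $2^{\lambda} < \mu_1$ forces the descent.
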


 In particular, by taking $\lambda = \omega$ in the theorem above, we conclude that, forbidding vertices of finite degree, at least $(2^{\omega})^{+\omega} = \mathfrak{c}^{+\omega}$ vertices were needed to find a graph that has no unfriendly partitions. Since $\aleph_{\omega}\leq \mathfrak{c}^{+\omega}$, that amount of vertices is quite surprising. Therefore, we will study the least cardinal $\varkappa$ that satisfies the following property: there is a graph with $\varkappa$ vertices, all them of infinite degree, that admits no unfriendly partition. The current literature concerning unfriendly partitions easily provides a first lower bound for its value:
 
 \begin{theo}[\cite{Aharoni1990}, Theorem 2]
 	\label{finitoscardinais}
 	Let $\kappa_0 < \kappa_1 < \dots <\kappa_n$ be a finite collection of infinite cardinals, with $\kappa_i$ regular for all $1 \leq i \leq n$. Let $G = (V,E)$ be a graph such that $|\{v\in V : |N(v)| \text{ is finite}\}|< \kappa_0$ and $|N(v)| \in \{\kappa_0, \kappa_1,\dots,\kappa_n\}$ for every vertex $v\in V$ with infinite degree. Then, $G$ admits an unfriendly partition.
 \end{theo}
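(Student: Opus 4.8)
The plan is to exploit the monotonicity property already noted in the introduction: once a partial coloring is unfriendly at a vertex $v$, every extension stays unfriendly at $v$. For an infinite-degree vertex $v$ with $|N(v)| = \kappa_i$ this reduces the whole requirement to one robust target, namely that $v$ already has $\kappa_i$ differently-coloured neighbours inside the current domain. Indeed, if $\{x \in N(v)\cap D : c(x)\neq c(v)\}$ has full cardinality $\kappa_i$, it dominates every competing set and $v$ remains unfriendly under any further extension. The construction therefore aims to hand every infinite-degree vertex $\kappa_i$ oppositely-coloured neighbours, while colouring the fewer-than-$\kappa_0$ finite-degree vertices so that the ordinary finite condition holds there; these two tasks I would separate.

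The local engine is that, \emph{in isolation}, any infinite-degree vertex is satisfiable. Fix $v$ with $|N(v)| = \kappa_i$ and $i\geq 1$, so $\kappa_i$ is regular. Writing $a_0, a_1$ for the numbers of neighbours already coloured $0,1$ and $u$ for the number still uncoloured, we have $a_0 + a_1 + u = \kappa_i$, so by regularity at least one of the three equals $\kappa_i$. If $a_0 = \kappa_i$ (resp.\ $a_1 = \kappa_i$) I set $c(v)=1$ (resp.\ $0$) and $v$ gains its $\kappa_i$ opposite neighbours at once; if instead $u = \kappa_i$, I set $c(v)=0$ and colour $\kappa_i$ hitherto uncoloured neighbours with $1$, a \emph{fan} that again supplies the $\kappa_i$ opposite neighbours. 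Every fan edge is bichromatic, so realising a fan only helps its endpoints and, by monotonicity, never disturbs a vertex already declared unfriendly.

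The difficulty is global rather than local, and is where I expect the real work to lie. A vertex coloured early — say as another vertex's fan member — may accumulate $\kappa_i$ \emph{equally} coloured neighbours before it is itself treated, after which it can no longer be satisfied without recolouring; controlling this \textbf{burying} phenomenon is the crux, and it is exactly here that the hypotheses enter. The natural attempt is an induction on $n$ that peels off the top cardinal $\kappa_n$: regularity of $\kappa_n$ would let me run a recursion in which, whenever a $\kappa_n$-vertex is first coloured, I immediately \emph{reserve} for it $\kappa_n$ as-yet-untouched neighbours as a future fan. The obstacle is that freezing the satisfied top class can itself bury a lower-degree vertex $w$: if all of $w$'s $\kappa_j$ neighbours are frozen top-vertices, the freeze may leave $w$ with $\kappa_j$ same-coloured neighbours and none opposite. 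Hence the fans of the top class must be chosen with foresight, balancing the colours they impose on lower vertices; the finiteness of the list $\kappa_0 < \dots < \kappa_n$ together with the regularity of the upper cardinals is precisely what should make such a bounded reservation-and-balancing scheme maintainable, and carrying it out rigorously is the principal obstacle.

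The base case is a single infinite cardinal $\kappa_0$ with fewer than $\kappa_0$ finite-degree vertices. Here I would first restrict to the infinite-degree vertices $I$: each $v\in I$ has at most $|F| < \kappa_0$ finite-degree neighbours, hence $\kappa_0$ neighbours inside $I$, so it suffices to make $G[I]$ unfriendly and then repair the finite vertices. Since $\kappa_0$ may be \emph{singular}, the regular-cardinal pigeonhole of the second paragraph is unavailable, so this pure single-cardinal case needs a separate, more delicate recursion that orders $I$ so each vertex is satisfied before it can be buried; I regard this, alongside the burying bookkeeping above, as the hard part. Finally the finite-degree vertices form a locally finite graph $G[F]$ on fewer than $\kappa_0$ vertices, which receives an unfriendly partition by the compactness result already quoted. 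Each such vertex $v$ has all its finitely many neighbours present, and I would use the freedom in colouring its infinite-degree neighbours — each of which needs only $\kappa$-many opposite neighbours and is indifferent to its fewer-than-$\kappa_0$ edges towards $F$ — to tilt $v$'s balance correctly, absorbing the finitely many unavoidable conflicts into the slack of the unfriendly colouring of $G[F]$.
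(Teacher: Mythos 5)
Your proposal correctly identifies the global strategy (induction peeling off the top regular cardinal) and the central difficulty (vertices getting ``buried'' under a full set of same-coloured neighbours before they are treated), but it stops exactly there: you yourself write that carrying out the reservation-and-balancing scheme is ``the principal obstacle'' and that the single-cardinal base case is ``the hard part''. Those two admissions \emph{are} the theorem; what your text actually establishes is only the monotonicity observation and a pigeonhole remark for regular degrees, so this is a plan with the load-bearing steps missing, not a proof.

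For the induction step, the paper's argument (given there as Theorem \ref{adaptado}; the statement with finite-degree vertices is quoted from \cite{Aharoni1990} rather than reproved) resolves the burying problem with machinery your sketch lacks. Writing $M$ for the vertices of top degree $\kappa$ and $N$ for the rest: (i) regularity of $\kappa$ gives $|N(C)|<\kappa$ for every connected component $C$ of $G[N]$; (ii) before anything else one colours the \emph{maximal bipartite pair} $(F_0,F_1)$ --- $F_0\subset M$ whose members have $\kappa$ neighbours in $F_1\subset N$, and $F_1$ whose members have full degree inside $F_0$ --- with the two constant colours and takes a closure; maximality of this pair is what yields the quantitative claim that any $S\subset M\setminus\overline{D}$ with $|S|<\kappa$ can bury fewer than $\kappa$ vertices of $N\setminus\overline{D}$, which is precisely the control your appeal to ``foresight'' and ``balancing'' gestures at without providing; (iii) vertices of $N$ are never coloured individually, so they are never at risk of later burial: entire components of $G[N\setminus\overline{D}]$ are coloured in one step, by colouring their $M$-neighbourhood, taking a closure, and invoking the induction hypothesis on what remains, after which monotonicity makes them permanently unfriendly --- your ``fan'' device, which colours $\kappa$ isolated $N$-vertices at once without making them unfriendly, is exactly what this mechanism is designed to replace, since fans recreate the burying problem one cardinal down; (iv) vertices of $M$ receive no one-shot reserved fan: they are enumerated with $\kappa$ repetitions and gain one opposite-coloured neighbour per visit, accumulating $\kappa$ such neighbours by regularity. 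Finally, the base case you flag (a single, possibly singular $\kappa_0$ with fewer than $\kappa_0$ finite-degree vertices) is genuinely nontrivial and is not handled by your locally-finite-plus-tilting sketch (the finite-degree vertices need not form a locally finite graph whose unfriendly partition survives the interaction with the infinite part); since you leave both it and the induction step open, the proposal does not reduce the theorem to anything previously established.
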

 
 \begin{corol}
 	\label{desigualdadetrivial}
 	$\aleph_{\omega}\leq \varkappa $.
 \end{corol}
\begin{proof}
	For instance, suppose that $\varkappa < \aleph_{\omega}$. Then, there is a graph $G = (V,E)$ with $|V|<\aleph_{\omega}$, whose vertices have infinite degree and that has no unfriendly partition. Fix $n\in\mathbb{N}$ so that $|V| = \aleph_n$. Since $G$ has no vertices of finite degree, $|N(v)| \in \{\aleph_0,\aleph_1,\dots,\aleph_n\}$. Noticing that every successor cardinal is regular, $G$ is under the hypothesis of Theorem \ref{finitoscardinais}. Therefore, contradicting its choice, $G$ has an unfriendly partition.  
\end{proof}

Consistently, it is also not hard to verify that the inequality above is tight. This because $\mathfrak{c}^{+\omega} = \aleph_{\omega}$ under the Continuum Hypothesis (CH), so that the construction of Milner and Shelah given by Theorem \ref{contraexemplozfc} attests that $\varkappa \leq \aleph_{\omega}$. On the other hand, as a less immediate conclusion, we will prove that the equality $\varkappa = \aleph_{\omega}$ is actually independent from $\mathrm{ZFC}+\aleph_{\omega}<\mathfrak{c}$, a theory in which the cardinals $\aleph_{\omega}$ and $\mathfrak{c}^{+\omega}$ are distinct. In particular, that statement is independent from the usual axioms of set theory. To that aim, we recover another theorem due to Milner and Shelah in \cite{shelah_milner_1990}:

 \begin{theo}[\cite{shelah_milner_1990}, Theorem 3]
 	\label{consistente}
 	 Suppose that there is a $p-$point generated by a family of size $\aleph_1$.  Then, there is a graph $G = (V,E)$ with $|V| = \aleph_{\omega}$, without unfriendly partitions and such that all its vertices have infinite degree. 
 \end{theo}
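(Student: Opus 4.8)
The plan is to reproduce the level-by-level construction behind Theorem \ref{contraexemplozfc}, but to let the hypothesised $p$-point play the role that the full power set $\wp(\lambda)$ plays there. Recall that the size $(2^\lambda)^{+\omega}$ produced by that theorem is precisely the $\omega$-th successor tower erected over the cardinal $2^\lambda$; that cardinal enters because, at each of the $\omega$ levels, one needs a family of ``test sets'' rich enough to defeat every possible local $2$-colouring, and $\wp(\lambda)$ is the naive such family. The point of the present hypothesis is that a $p$-point $\mathcal U$ on $\omega$ with a generating base $\{A_\xi : \xi<\omega_1\}$ of size $\aleph_1$ can substitute for that family: the base already provides $\aleph_1$ test sets, and the $p$-point property (closure of countable subfamilies under pseudo-intersections inside $\mathcal U$) supplies the completeness that raw subsets lack. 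If this substitution goes through, the effective branching cardinal drops from $2^\omega=\mathfrak c$ to $\aleph_1$, and the resulting size is the $\omega$-th successor tower over $\aleph_1$, namely $\aleph_1^{+\omega}=\aleph_\omega$.

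Concretely, first I would stratify the graph into levels $V=\bigcup_{n<\omega}V_n$ with $|V_n|=\aleph_{n+1}$, so that $|V|=\aleph_\omega$, indexing the vertices of each level by sequences that encode, level by level, how an attempted colouring splits a fan of neighbours meeting $\mathcal U$. Each vertex is to be joined to an infinite fan of successors, which automatically forces every vertex to have infinite degree, as demanded by the statement and as needed to avoid the hypotheses of Theorem \ref{finitoscardinais} and Corollary \ref{desigualdadetrivial}. The edges between consecutive levels are then arranged, following the template of Theorem \ref{contraexemplozfc}, so that an unfriendly colouring is compelled to make coherent ``majority'' choices along the fans, each such choice being recorded as a member of $\mathcal U$.

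Next, and this is where $\mathcal U$ does its work, I would suppose toward a contradiction that $c\colon V\to 2$ is an unfriendly partition and read off from $c$, at each level $n$, a set $B_n\in\mathcal U$ witnessing the majority colour encountered along the fan at that level; refining against the base $\{A_\xi:\xi<\omega_1\}$ keeps all the bookkeeping inside an $\aleph_1$-sized family, which is exactly why $\aleph_1$ branching suffices. The $p$-point property then yields a single $B\in\mathcal U$ that is a pseudo-intersection of $\{B_n:n<\omega\}$, and $B$ pinpoints a vertex at which the same-coloured neighbours strictly outnumber, in cardinality, the oppositely-coloured ones. Since at such a vertex unfriendliness can fail only when the same-coloured side has the strictly larger cardinality, this contradicts the assumption on $c$, and no unfriendly partition can exist.

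The main obstacle is the combinatorial core hidden in the last two steps: proving that an $\aleph_1$-generated $p$-point, closed under countable pseudo-intersections inside $\mathcal U$, genuinely diagonalises against \emph{every} $2$-colouring, i.e. that it can replace the full $\wp(\omega)$ used in Theorem \ref{contraexemplozfc}. Making the encoding precise enough that a putative unfriendly partition really hands back a coherent descending sequence in $\mathcal U$ — rather than merely finitely many mutually incompatible local obstructions — is the delicate part, and it is exactly here that both features of a $p$-point, its character $\aleph_1$ and its pseudo-intersection property, become indispensable; the remaining cardinal bookkeeping, confirming $|V|=\aleph_\omega$ and that no vertex has finite degree, should then be routine.
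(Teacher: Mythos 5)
The first thing to note is that the paper does not prove this statement at all: Theorem \ref{consistente} is quoted from Milner and Shelah (\cite{shelah_milner_1990}, Theorem 3) and used as a black box, so there is no in-paper proof to compare yours against. Your sketch does capture the standard description of how their Theorem 3 relates to their Theorem 2: replace the role of $\wp(\omega)$ by an ultrafilter with a base of size $\aleph_1$, so that the tower of cardinals can start at $\aleph_1$ and the graph can have $\aleph_{1+\omega}=\aleph_\omega$ vertices, with the pseudo-intersection property of the $p$-point compensating, across the $\omega$ levels, for the loss of the full power set.

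As a proof, however, your text has a genuine gap, and you say so yourself: the entire combinatorial core is deferred. You never define the edge relation, never state the precise property of the generating family $\{A_\xi : \xi<\omega_1\}$ that the construction needs, and never carry out the counting argument showing that a putative unfriendly partition leads to a contradiction. The step asserting that the pseudo-intersection $B$ ``pinpoints a vertex at which the same-coloured neighbours strictly outnumber, in cardinality, the oppositely-coloured ones'' is exactly the content of the theorem and is asserted rather than proved; note also that a pseudo-intersection is a subset of $\omega$, not a vertex, so even the shape of this step is off. In the actual argument the $p$-point property is what makes the majority choices made at countably many levels cohere into a single violating configuration, while the character $\aleph_1$ bounds the number of distinct ``types'' of vertices so that the bottom level can have size $\aleph_1$ rather than $\mathfrak{c}$; your sketch gestures at both roles but establishes neither. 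What you have is a plausible plan whose correctness cannot be assessed, because the only part that could fail is precisely the part left out.
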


This result is convenient because its hypothesis is consistent with $\mathcal{ZFC}+\aleph_{\omega}<\mathfrak{c}$ (see Theorem 8.0 (b) of \cite{pponto2}, for instance). Therefore, Corollary \ref{desigualdadetrivial} and Theorem \ref{consistente} combined conclude that the statement $\varkappa = \aleph_{\omega}$ is consistent with $ZFC+\aleph_{\omega}<\mathfrak{c}$. Our goal now is prove that this equality is, actually, independent from these axioms.

Introducing the main tool we will apply to provide that proof, let $(\mathbb{P},\leq)$ be any partially ordered set. We say that a non-empty subset $F\subset \mathbb{P}$ is a \textbf{filter} if, roughly speaking, is a collection of big elements of $\mathbb{P}$. More precisely, it is a filter if the following properties are verified:
\begin{itemize}
	\item $F\neq \emptyset$.
	\item $F$ is upward-closed, that is, given $p\in \mathbb{P}$, $p\in F$ if $p \geq s$ for some $s\in F$.
	\item For any pair $s,t \in F$, there is $p\in F$ such that $p \leq s$ and $p \leq t$.
\end{itemize}

We also say that two elements $p,q\in \mathbb{P}$ are \textbf{incompatible} if there is no $r\in \mathbb{P}$ such that $r \leq p$ and $r \leq q$. On the other hand, $D\subset \mathbb{P}$ is called a \textbf{dense} subset in $\mathbb{P}$ if, for every $p\in \mathbb{P}$, there is $d\in D$ such that $d \leq p$. Finally, we say that $\mathbb{P}$ satisfies the \textbf{countable chain condition} if every collection of pairwise incompatible elements of $\mathbb{P}$ is countable. In that terms, Martin's Axiom is the following statement:
\begin{center}
	\textbf{Martin's Axiom (MA):} Fix $(\mathbb{P},\leq)$ a partially ordered set that satisfies the countable chain condition. If $\mathcal{D}$ is a family of dense subsets of $\mathbb{P}$ with $|\mathcal{D}| < \mathfrak{c}$, then there exists a filter $F$ of $\mathbb{P}$ such that $F\cap D \neq \emptyset$ for every $D\in\mathcal{D}$.
\end{center}

Convenient to our study, Martin's Axiom is independent of $\mathrm{ZFC}+\aleph_{\omega}<\mathfrak{c}$. Moreover, in a theory where $MA$ holds, $\mathfrak{c}$ is a regular cardinal. Using some other properties that can be consulted in \cite{livro}, for example, we are ready to prove the following:

\begin{prop}
    \label{martin}
Suppose that Martin's Axiom holds. Then, every graph $G = (V,E)$ having all its vertices of infinite degree and such that $|V|<\mathfrak{c}$ admits an unfriendly partition. 
\end{prop}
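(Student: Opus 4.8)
The plan is to obtain the partition as $c=\bigcup G$ for a filter $G$ supplied by Martin's Axiom, using a Cohen-style forcing as the genericity engine. I would force with $\mathbb{P}=\mathrm{Fn}(V,2)$, the set of finite partial colorings $p\colon\dom(p)\to 2$ with $\dom(p)\in[V]^{<\omega}$, ordered by reverse inclusion. Two conditions are compatible exactly when they agree on the intersection of their domains, so the union of any filter is a genuine partial function; and the $\Delta$-system lemma shows $\mathbb{P}$ is ccc (any uncountable set of finite domains refines to a $\Delta$-system with finite root, on which only finitely many patterns can occur), so $\mathrm{MA}$ is applicable regardless of $|V|$. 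The decisive reduction is that, all degrees being infinite, a total $c$ is already unfriendly at $v$ as soon as $|\{x\in N(v):c(x)\neq c(v)\}|=|N(v)|$, because then $|\{x\in N(v):c(x)=c(v)\}|\le|N(v)|$ holds automatically. Hence it suffices to meet dense sets forcing every vertex to see its full degree many oppositely coloured neighbours, together with the sets $E_v=\{p:v\in\dom(p)\}$ that make the union total.

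First I would fix the colour of $v$ and then drive up its opposite neighbours. For $v$ of \emph{regular} degree $\kappa_v=|N(v)|$, fix an enumeration $N(v)=\{u^v_\alpha:\alpha<\kappa_v\}$ and set, for each $\alpha<\kappa_v$, $D^v_\alpha=\{p:v\in\dom(p)\text{ and }p(u^v_\beta)\neq p(v)\text{ for some }\beta\ge\alpha\text{ with }u^v_\beta\in\dom(p)\}$. Each $D^v_\alpha$ is dense, since a finite condition meets $\{u^v_\beta:\beta\ge\alpha\}$ in a finite set and may be extended by colouring one fresh such neighbour opposite to $v$. A filter meeting every $D^v_\alpha$ then produces a cofinal, hence (by regularity) size-$\kappa_v$, set of opposite neighbours. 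For $v$ of \emph{singular} degree this scheme only yields $\mathrm{cf}(\kappa_v)$ opposite neighbours, which may be too few; so I would first partition $N(v)=\bigsqcup_{i<\mathrm{cf}(\kappa_v)}B_i$ into blocks whose sizes $\mu_i$ are \emph{regular} cardinals cofinal in $\kappa_v$, run the argument inside each block, and thereby secure $\mu_i$ opposite neighbours in $B_i$ and $\sum_i\mu_i=\kappa_v$ in total.

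It remains to count dense sets and invoke the axiom. For each $v$ the scheme uses at most $\kappa_v\le|V|$ dense sets, so the whole family, over all $v$ and together with the totality sets $E_v$, has size at most $|V|\cdot|V|=|V|<\mathfrak{c}$. Since $\mathbb{P}$ is ccc, Martin's Axiom yields a filter $G$ meeting all of them, and $c=\bigcup G$ is a total coloring with $|\{x\in N(v):c(x)\neq c(v)\}|=|N(v)|$ for every $v$, that is, an unfriendly partition. The step I expect to be the main obstacle is precisely the cardinality bookkeeping for vertices of singular degree: the naive cofinal-sequence argument guarantees only $\mathrm{cf}(\kappa_v)$ opposite neighbours, yet one cannot instead quantify over all of $[N(v)]^{<\kappa_v}$, because under $\mathrm{MA}$ that family already has size $\mathfrak{c}$ and would exhaust the $<\mathfrak{c}$ budget needed to apply the axiom. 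Reconciling ``full degree many opposite neighbours'' with the dense-set budget is exactly what the decomposition into regular cofinal blocks is for, and checking that it does so is where the genuine content of the proof lies.
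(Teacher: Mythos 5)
Your proposal is correct and follows essentially the same route as the paper: the finite-partial-coloring poset $\mathrm{Fn}(V,2)$ with MA, totality dense sets, cofinality-plus-regularity dense sets for vertices of regular degree, and a decomposition of a singular degree $\kappa_v$ into blocks of regular size cofinal in $\kappa_v$ (the paper realizes your blocks as the intervals $[\gamma_\xi,\gamma_{\xi+1})$ of an enumeration of $N(v)$ along a cofinal sequence of regular cardinals). The only cosmetic difference is that your dense sets force neighbours of colour opposite to $p(v)$, while the paper's force both colours to appear cofinally, which makes the final count independent of $c(v)$; both versions work.
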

\begin{proof}
We will study the classical partial order $(\mathbb{P},\leq)$, where $\mathbb{P} = \{c: D \to 2 \mid D\subset V \text{ is finite}\}$ and $\leq$ is described by the following rule: $c \leq c'$ if, and only if, $c$ is an extension of $c'$. Denote by $\dom(c)$ the domain of a partial coloring $c\in \mathbb{P}$. It is well know that $(\mathbb{P},\leq)$ satisfies the countable chain condition. Moreover, it is easily verified that the items below define dense sets in $\mathbb{P}$:
\begin{enumerate} 
    \item For each $v\in V$, define $D_v = \{c\in \mathbb{P} : v\in\dom(c)\}$. 
    \item If $v\in V$ is a vertex of regular degree $\kappa_v$, fix $\{v_{\alpha}\}_{\alpha < \kappa_v}$ an enumeration of its neighborhood. Now, for each $\alpha < \kappa_v$, define the set $$R_v^{\alpha} = \{c\in \mathbb{P}: \text{ there are }\beta_0,\beta_1 > \alpha \text{ such that }c(v_{\beta_0}) = 0 \text{ and }c(v_{\beta_1}) = 1\}$$ 
    \item Similarly to the item above, if $v$ is a vertex of singular degree $\kappa_v$, fix $\{v_{\alpha}\}_{\alpha < \kappa_v}$ an enumeration of its neighborhood and $\{\gamma_{\xi}\}_{\xi < cf(\kappa_v)}$ a cofinal sequence in $\kappa_v$ of regular cardinals. Given $\alpha < \kappa_v$, let $\xi < cf(\kappa_v)$ be the index such that $\gamma_{\xi}\leq \alpha < \gamma_{\xi+1}$. Define then $$S_v^{\alpha} = \{c\in \mathbb{P}: \text{ there are }\alpha < \beta_0,\beta_1 < \gamma_{\xi+1} \text{ such that }c(v_{\beta_0}) = 0 \text{ and }c(v_{\beta_1}) = 1\}$$ 
\end{enumerate}

Therefore, the sets $\mathcal{D} = \{D_v : v \in V\}$, $\mathcal{R} = \{R_{v}^{\alpha} : v\in V \text{ has regular degree }\kappa_v, \alpha < \kappa_v\}$ and $\mathcal{S} = \{S_v^{\alpha}: v\in V \text{ has singular degree }\kappa_v, \alpha < \kappa_v\}$ are families of dense sets in $\mathbb{P}$. Regarding that $|\mathcal{D}| = |V|<\mathfrak{c}$, $|\mathcal{R}| \leq |V|\cdot |V| = |V| < \mathfrak{c}$ and $|\mathcal{S}|\leq |V|\cdot |V| = |V| < \mathfrak{c}$, Martin's Axiom guarantees the existence of a filter $F\subset \mathbb{P}$ that intersects every dense of $\mathcal{D}\cup \mathcal{R}\cup \mathcal{S}$.

We claim that the function $c= \displaystyle \bigcup_{f\in F}f$ is well-defined and that its domain is $V$. In fact, for every $v\in V$, once $F\cap D_v \neq \emptyset$, there is $f\in F$ with $v\in \dom(f)$. Moreover, if $g\in F$ is another coloring such that $v\in \dom(g)$, there is $h\in F$ satisfying $h \leq f,g$, because $F$ is a filter. Hence, $f(v) = h(v) = g(v)$, concluding the well definition of $c$.

Now, we will verify that $c$ is an unfriendly partition. For that, let $v\in V$ be any vertex and denote by $\kappa_v$ its degree. If $\kappa_v$ is regular, let $\{v_{\alpha}\}_{\alpha < \kappa_v}$ be the enumeration of its neighborhood as fixed by the item 2 above. By the choice of $F$, for every $\alpha < \kappa_v$ there is $f\in F \cap R_v^{\alpha}$. Then, by definition of $c$, $c(v_{\beta_0}) = f(v_{\beta_0}) = 0$ and $c(v_{\beta_1}) = f(v_{\beta_1}) = 1$ for some ordinals $\beta_0,\beta_1 > \alpha$. This proves that $\sup\{\alpha < \kappa_v : c(v_{\alpha}) = 0\} = \sup\{\alpha < \kappa_v : c(v_{\alpha}) = 1\} = \kappa_v$. As $\kappa_v$ is a regular cardinal, it follows that, in $c$, $v$ has $\kappa_v$ neighbors of color $0$ and $\kappa_v$ neighbors of color $1$. Therefore, $c$ is unfriendly in $v$.

Finally, suppose that $v$ is a singular cardinal. As done in the item 3 above, let $\{v_{\alpha}\}_{\alpha < \kappa_v}$ be an enumeration of its neighborhood and consider $\{\gamma_{\xi}\}_{\xi < cf(\kappa_v)}$ the cofinal sequence (of regular cardinals) fixed before. Then, given $\xi < cf(\kappa_v)$ and $\gamma_{\xi}\leq \alpha < \gamma_{\xi+1}$, again the choice of $F$ guarantees that there is $f\in F\cap S_{v}^{\alpha}$. This means that $c(v_{\beta_0}) = f(\beta_0) = 0$ and $c(v_{\beta_1}) = f(v_{\beta_1}) = 1$ for some $\alpha < \beta_0,\beta_1 < \gamma_{\xi+1}$. In other words, $\sup\{\gamma_{\xi} \leq \alpha < \gamma_{\xi+1} : c(v_{\alpha}) = 0\} = \sup\{\gamma_{\xi}\leq \alpha < \gamma_{\xi+1} : c(v_{\alpha}) = 1\} = \gamma_{\xi+1}$, implying that $|\{\gamma_{\xi} \leq \alpha < \gamma_{\xi+1} : c(v_{\alpha}) = 0\}|=|\{\gamma_{\xi}\leq \alpha < \gamma_{\xi+1} : c(v_{\alpha}) = 1\}|=\gamma_{\xi+1}$ by the fact that $\gamma_{\xi+1}$ is regular. Then, we proved that, in $c$, $v$ has at least $\gamma_{\xi+1}$ neighbors of color $0$ and at least $\gamma_{\xi+1}$ neighbors of color $1$, for every $\xi < cf(\kappa_v)$. Therefore, it has $\sup\{\gamma_{\xi+1}:\xi < cf(\kappa_v)\} = \kappa_v$ neighbors of each color. In particular, $c$ is unfriendly in $v$.  

\end{proof}

\begin{corol}
    \label{c1}
The statement $\varkappa = \aleph_{\omega}$ is independent from $\mathrm{ZFC}+\aleph_{\omega}<\mathfrak{c}$.
\end{corol}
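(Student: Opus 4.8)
The plan is to prove independence in the usual way, by producing two models of $\mathrm{ZFC}+\aleph_\omega<\mathfrak{c}$: one in which $\varkappa = \aleph_\omega$ holds and one in which it fails. Before anything, I would observe that $\varkappa$ is well-defined already in $\mathrm{ZFC}$: taking $\lambda = \omega$ in Theorem \ref{contraexemplozfc} yields a graph with no vertices of finite degree, no unfriendly partition, and exactly $\mathfrak{c}^{+\omega}$ vertices. Hence the class of cardinals admitting such a graph is non-empty, and $\varkappa \leq \mathfrak{c}^{+\omega}$ is a least element.

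For the consistency of $\varkappa = \aleph_\omega$, I would simply assemble the facts already recorded in this section. Corollary \ref{desigualdadetrivial} supplies the $\mathrm{ZFC}$-inequality $\aleph_\omega \leq \varkappa$, while Theorem \ref{consistente} produces a counterexample of size $\aleph_\omega$ under the assumption that there is a $p$-point generated by a family of size $\aleph_1$. That hypothesis is itself consistent with $\mathrm{ZFC}+\aleph_\omega<\mathfrak{c}$, so in a model witnessing this we obtain $\varkappa \leq \aleph_\omega$, and the two inequalities give $\varkappa = \aleph_\omega$.

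The substance of the argument is the opposite direction, the consistency of $\varkappa \neq \aleph_\omega$. Here I would pass to a model of $\mathrm{ZFC}+\mathrm{MA}+\aleph_\omega<\mathfrak{c}$, which is available since Martin's Axiom is consistent with $\mathrm{ZFC}+\aleph_\omega<\mathfrak{c}$. In such a model, Proposition \ref{martin} guarantees that every graph all of whose vertices have infinite degree and with fewer than $\mathfrak{c}$ vertices admits an unfriendly partition. Therefore any counterexample graph must have at least $\mathfrak{c}$ vertices, so $\varkappa \geq \mathfrak{c}$; combined with $\aleph_\omega < \mathfrak{c}$ this yields $\varkappa \geq \mathfrak{c} > \aleph_\omega$, and in particular $\varkappa = \aleph_\omega$ fails.

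Putting the two directions together shows that $\varkappa = \aleph_\omega$ is neither provable nor refutable from $\mathrm{ZFC}+\aleph_\omega<\mathfrak{c}$, which is the desired independence. I expect the corollary to be essentially a bookkeeping step: the genuine difficulty has already been absorbed into Proposition \ref{martin} and the cited theorems of Milner and Shelah, so the only care needed is in the appeals to the consistency of the auxiliary set-theoretic hypotheses — the $p$-point on one side and Martin's Axiom on the other — each with $\aleph_\omega<\mathfrak{c}$.
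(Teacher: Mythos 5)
Your proposal is correct and follows essentially the same route as the paper: consistency of $\varkappa = \aleph_{\omega}$ via Corollary \ref{desigualdadetrivial} together with Theorem \ref{consistente} (whose $p$-point hypothesis is consistent with $\aleph_{\omega}<\mathfrak{c}$), and consistency of $\varkappa \neq \aleph_{\omega}$ via Proposition \ref{martin} in a model of $\mathrm{MA}+\aleph_{\omega}<\mathfrak{c}$. The only cosmetic difference is that you record the slightly stronger conclusion $\varkappa \geq \mathfrak{c}$ in the second direction, where the paper only notes $\varkappa \neq \aleph_{\omega}$; both follow identically from Proposition \ref{martin}.
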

\begin{proof}
	We already argued that $\varkappa = \aleph_{\omega}$ is consistent with $\mathrm{ZFC}+\aleph_{\omega}< \mathfrak{c}$. Verifying the consistence of its negative, we observe that $\varkappa \neq \aleph_{\omega}$ under $\mathrm{ZFC}+\mathrm{MA}+\aleph_{\omega}<\mathfrak{c}$. In fact, if $G = (V,E)$ is a graph without vertices of finite degree and $|V| = \aleph_{\omega}< \mathfrak{c}$, then, by Proposition \ref{martin}, $G$ admits an unfriendly partition. Therefore, $\varkappa \neq \aleph_{\omega}$ by the definition of $\varkappa$.   
\end{proof}

Now, we will study the consistence of the statement $\varkappa = \mathfrak{c}^{+\omega}$ and its negative under $\mathrm{ZFC}+\aleph_{\omega}<\mathfrak{c}$. That scenario is interesting because $\mathfrak{c}^{+\omega} \neq \aleph_{\omega}$ when supposing that $\aleph_{\omega}<\mathfrak{c}$. For this study, we revisit Theorem \ref{finitoscardinais}, due to Aharoni, Milner and Prikry, and present a slightly rewritten proof to provide a convenient statement.

Before that, given a partial coloring $c: D \to 2$ of a graph $G = (V,E)$ that has no vertices of finite degree, it is convenient to extend that function to the whole graph in an unfriendly way, that is, to be unfriendly in the set $V\setminus D$ of remaining vertices. To this aim, the \textbf{closure} of $c$ is the function $\overline{c}$ defined recursively as follows:
\begin{enumerate}    
	\item To start, consider $\overline{c}(v) = c(v) $ for every $c\in D$, so that $\overline{c}$ will be an extension of $c$. Let $D_0 = \{v\in V\setminus D : |N(v)\setminus D|< |N(v)|\}$ denote those vertices of $V\setminus D$ with less uncolored neighbors than their degree. In particular, for each vertex $v\in D_0$ there is a color $\overline{c}(v)\in 2$ such that $v$ has $|N(v)|$ neighbors of color $1-\overline{c}(v)$ within $D$. Writing $\overline{c}(v) = c(v)$ for all $v\in D$, this extends $\overline{c}$ to a coloring $\overline{c}: W_0 \to 2$ unfriendly in all the vertices of $D_0$, where $W_0 := D\cup D_0$.

	\item For each ordinal $\alpha > 0$, consider that $W_{\beta}$ and $\overline{c}(v)$ are defined for each $v\in W_{\beta}$ with $\beta < \alpha$. As done with $D_0$, let $$D_{\alpha} = \left\{v\in V\setminus \left(\bigcup_{\beta < \alpha}W_{\beta}\right): \left|N(v) \setminus \left(\bigcup_{\beta < \alpha}W_{\beta}\right)\right|< |N(v)|\right\}$$ be the set of uncolored vertices of infinite degree whose neighbors are almost all colored. Then, for each $v\in D_{\alpha}$ there is a color $\overline{c}(v)\in 2$ such that $|N(v)|$ of its neighbors have color $1-\overline{c}(v)$. As before, this defines $\overline{c} $ to be a partial coloring unfriendly in $D_{\alpha}$, and its domain will now be denoted by $\displaystyle W_{\alpha} = D_{\alpha}\cup \bigcup_{\beta < \alpha}W_{\beta}$.
	
	\item  If $\Gamma$ is the least ordinal such that $D_{\Gamma} = \emptyset$, this procedure defines a coloring $\overline{c}: \displaystyle \bigcup_{\alpha < \Gamma}W_{\alpha} \to 2$ that extends $c$ and that is unfriendly in all the vertices of $W_{\alpha}\setminus D$ for every $\alpha < \Gamma$. Denoting its domain by $\overline{D}$, we observe that, by the choice of $\Gamma$, $|N(v)\cap (V\setminus \overline{D})| = |N(v)|$ for all $v\in V\setminus \overline{D}$. In particular, $\overline{\overline{c}} = \overline{c}$.
\end{enumerate}

Inspired by the fact that $\overline{\overline{c}} = \overline{c}$, we say that a partial coloring $c: D \to 2$ is \textbf{closed} if $\overline{c} = c$. Considering this definition, we are ready to prove the statement below:    

\begin{theo}[\cite{Aharoni1990}, Theorem 2]
    \label{adaptado}
Let $\mathcal{K}$ be a family of infinite cardinals such that the following property holds: 
\begin{center}
Every graph whose vertices have degree as a cardinal of $\mathcal{K}$ has an unfriendly partition.
\end{center}
Then, if $\kappa$ is a regular cardinal greater than every member of $\mathcal{K}$, every graph whose vertices have degree as a cardinal of $\mathcal{K}\cup \{\kappa\}$ admits an unfriendly partition.
\end{theo}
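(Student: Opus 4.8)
The plan is to reduce the statement to the assumed property of $\mathcal{K}$ by first disposing of the vertices of degree $\kappa$ and then invoking the hypothesis on what remains. Write $U=\{v\in V:|N(v)|=\kappa\}$ and $B=V\setminus U$, so every vertex of $B$ has degree in $\mathcal{K}$. The aim is to build a closed partial coloring $c$, unfriendly on its domain, whose domain contains all of $U$. Since $c$ is closed, every uncolored vertex $b$ keeps $|N(b)|$ uncolored neighbors, so the induced subgraph on $V\setminus\dom(c)$ has \emph{all} its degrees in $\mathcal{K}$; the hypothesis then furnishes an unfriendly partition $d$ of it, and $c\cup d$ is the candidate unfriendly partition of $G$. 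In this union the closure-absorbed vertices are unfriendly by construction and stay so under extensions, as already observed in the excerpt. Thus the whole problem is pushed onto (i) coloring $U$ and (ii) guaranteeing that the vertices colored by $d$ remain unfriendly once their colored neighbors are taken into account.

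For step (i) the engine is the regularity of $\kappa$: any $\kappa$-sized set of neighbors that is $2$-coloured has a colour class of size $\kappa$, because $\mathrm{cf}(\kappa)=\kappa>2$. Hence a vertex $v\in U$ is unfriendly precisely when it has $\kappa$ opposite-coloured neighbors, and then it is unfriendly \emph{robustly} (under any extension). I would split $U=U_1\cup U_2$, where $U_1$ consists of those $v$ with $\kappa$ neighbors inside $U$ and $U_2$ of those with fewer than $\kappa$ neighbors in $U$ — hence, by regularity, with $\kappa$ neighbors in $B$. Vertices of $U_1$ can be satisfied purely within $G[U]$ by a transfinite recursion that hands each of them $\kappa$ opposite neighbors among $U$; their satisfaction is then immune to whatever happens in $B$. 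Vertices of $U_2$, on the other hand, must draw their $\kappa$ opposite neighbors from $B$: by regularity, once $d$ is fixed, at least one colour occurs $\kappa$ times among the $B$-neighbors of $v\in U_2$, so one colours $v$ with the opposite colour. Already the case $\mathcal{K}=\varnothing$ (where $U=V$ and only $U_1$-type reasoning survives) is the substantive content, namely that every graph of constant regular degree $\kappa$ admits an unfriendly partition; this is handled by the same recursion together with a strengthened closure that absorbs a vertex as soon as one colour class reaches its full degree.

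The step I expect to be the main obstacle is (ii), the interaction between the two colorings. The coloring $d$ is a black box: it only balances each $b$ against its neighbors \emph{inside} the remainder, so the already-coloured $U$- and absorbed-$B$-neighbors of $b$ are invisible to it and may tip $b$ out of being unfriendly. This is harmless when $b$ has regular degree (unfriendliness then forces $|N(b)|$ opposite neighbors, which dominate any bounded boundary), but it is genuinely delicate when $\mathcal{K}$ contains singular cardinals, since there a merely-unfriendly $b$ may have both colour classes of size $<|N(b)|$ and be overturned by its coloured neighbors. To control this I would strengthen the closure so that every surviving vertex has strictly fewer than its-degree-many coloured neighbors (absorbing any vertex at which some colour already reaches its full degree), and, where that is not enough, encode the residual boundary imbalance at each surviving $b$ into an auxiliary graph whose degrees still lie in $\mathcal{K}$, applying the hypothesis to that graph instead of to the bare remainder. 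Making this encoding keep all degrees in $\mathcal{K}$ while genuinely compensating the boundary — and simultaneously coordinating the choice of $d$ with the colors assigned to $U_2$ — is the technical heart of the argument; the reduction via the closure and the regularity of $\kappa$ are the routine parts around it.
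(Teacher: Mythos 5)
Your overall reduction---split off the degree-$\kappa$ vertices, keep a closed unfriendly partial coloring, and hand the full-degree remainder to the hypothesis---matches the paper's starting point, but the proposal has two problems: the obstacle you single out as the ``technical heart'' is not actually an obstacle, while the genuinely hard step is left unsolved. Concerning your step (ii): if the partial coloring $c$ is closed, every surviving vertex $b$ retains all $|N(b)|$ of its neighbors in the remainder; if $d$ is unfriendly at $b$ inside the remainder, then since the two color classes cover a set of size $|N(b)|$ and the union of two sets, at least one infinite, has the cardinality of the larger one, the opposite class has size exactly $|N(b)|$. Hence $b$ has full-degree-many opposite neighbors and stays unfriendly under any coloring of its boundary. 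This argument does not use regularity at all, so the scenario you fear for singular members of $\mathcal{K}$ (``both colour classes of size $<|N(b)|$'') cannot occur; no strengthened closure and no auxiliary-graph encoding are needed. This is exactly how the paper dispenses with the interaction: every vertex it hands to the hypothesis has its full degree inside the subgraph being colored.

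The real gap is in step (i), at $U_2$. Your plan needs $\dom(c)\supseteq U$ before the hypothesis can be applied (otherwise the remainder contains vertices of remainder-degree $\kappa\notin\mathcal{K}$), yet the colors of $U_2$ are to be chosen ``once $d$ is fixed''---and $d$ only exists after $\dom(c)\supseteq U$ is settled. This is circular, and the natural patch (provisional colors for $U_2$, recolor after $d$) fails: the closure-absorbed vertices and the opposite-neighbor certificates of $U_1$-vertices may depend on the provisional $U_2$ colors (a $U_1$-vertex can have all its $U$-neighbors in $U_2$; a $B$-vertex can have its whole neighborhood in $U_2$), so recoloring destroys them. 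These ``trapped'' configurations---low-degree vertices all of whose neighbors are degree-$\kappa$ vertices which in turn need those very vertices as opposite-colored neighbors---are the actual crux, and the paper defuses them with machinery absent from your sketch: it proves that every component of $G[N]$ has fewer than $\kappa$ vertices and neighbors, pre-colors a \emph{maximal bipartite pair} $(F_0,F_1)$ and closes it, proves that after coloring fewer than $\kappa$ vertices of $M$ only fewer than $\kappa$ vertices of $N$ are trapped, and then runs an interleaved recursion of length $\kappa$ in which each $M$-vertex recurs $\kappa$ times and each step colors one fresh component of $G[N]$ (boundary first, then closure, then one application of the hypothesis to the full-degree core, then a global color flip of that block). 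In particular the hypothesis is invoked once per component throughout the recursion, not once at the end; your two-phase architecture cannot be completed without rediscovering something equivalent to this.
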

\begin{proof}
Let $G = (V,E)$ be a graph with $|N(v)|\in \mathcal{K}\cup \{\kappa\}$ for every $v\in V$. To better apply our hypothesis, let $M\subset V$ be the set of vertices of degree $\kappa$ and $N = V\setminus M$ be the set of vertices of degree in $\mathcal{K}$. For any subset $X\subset V$, we will write $N(X) = \displaystyle \bigcup_{v\in X}N(v)$. Then, the following claim holds:
\begin{center}
    \textbf{Claim:} $|N(C)|< \kappa$ for each connected component $C = (V_C,E_C)$ of $G[N]$.
\end{center}
\begin{proof}[Proof of the claim]
Fix a vertex $v\in N(C)$ and, for each $i\in\mathbb{N}$, denote by $N^i(v) = \{x\in N: \text{ there is a path of length }i\text{ from }v\text{ to }x\text{ contained in }G[N]\}$ the set of vertices of $N$ connected to $v$ by a path of $i$ vertices of $G[N]$. Therefore, the definition of connected component guarantees that $V_C = \displaystyle \bigcup_{i\in \mathbb{N}}N^i(v)$. Besides that, $N^1(v) = N(v)$ has less than $\kappa$ vertices once $v\in N$ and, for each $i \in \mathbb{N}$, $N^{i+1}(v)\subset \displaystyle\bigcup_{x \in N^i(v)}N^i(x)$. Supposing that $|N^i(v)|< \kappa$, we conclude that $|N^{i+1}(v)|<\kappa$ since $\kappa$ is a regular cardinal. In other words, by induction on $i$, we proved that $|N^i(v)|<\kappa$ for every $i\in\mathbb{N}$ and, then, again by the fact that $\kappa$ is a regular cardinal, $|V_C|< \kappa$.

Regarding that $|N(v)|< \kappa$ for every $v\in V_C\subset N$ and that $N(C) = \displaystyle \bigcup_{v\in V_C}N(v)$, it follows that $|N(C)|<\kappa$, one more time by the fact that $\kappa$ is a regular cardinal.
\end{proof}

For simplicity, to the end of this proof, we will denote the vertex set $V_C$ of a connected component $C$ of $G[N]$ also by $C$. To properly color some vertices of $G$, we say that two (disjoint) sets $F_0\subset M$ and $F_1\subset N$ define a \textbf{bipartite pair} $(F_0,F_1)$ if, for every $v\in F_0$ and $u\in F_1$, we have $|N(v)\cap F_1| = |N(v)| = \kappa$ and $|N(u)\cap F_0| = |N(u)| <\kappa$. In other words, for $i\in\{0,1\}$, every member of $F_i$ has its degree of neighbors in $F_{1-i}$. Note that this property is closed by unions, because, if $(F_0',F_1')$ is another bipartite pair, than every member of $F_i\cup F_i'$ has its degree as amount of neighbors in $F_{1-i}\cup F_{1-i}'$. Then, we denote by $(F_0,F_1)$ the \textbf{maximal bipartite pair}, described by the unions $F_0 = \displaystyle \bigcup_{(F_0',F_1')\in \mathbb{BP}}F_0'$ and $F_1 = \displaystyle \bigcup_{(F_0',F_1')\in \mathbb{BP}}F_1'$, where $\mathbb{BP}$ is the set of all bipartite pairs of $G$.

That notation induces a natural unfriendly partial coloring $c': F_0 \cup F_1 \to 2$, given by $c'(v) = 0$ and $c'(u) = 1$ for every $v\in F_0$ and $u\in F_1$. Denoting its closure by $\overline{c}': \overline{D}\to 2$, it follows that $\overline{c}'$ is also an unfriendly partial coloring and that $|N(v)\setminus \overline{D}| = |N(v)|$ for every $v\in V\setminus \overline{D}$. Therefore, it is enough to find an unfriendly partial coloring $c : V\setminus \overline{D}\to 2$ to $G[V\setminus \overline{D}]$, so $c\cup \overline{c}'$ will be the unfriendly partition desired. To this end, the choice of the pair $(F_0,F_1)$ guarantees the property below:

\begin{center}
\textbf{Claim:} Let $S\subset M \setminus \overline{D}$ be any set with $|S|<\kappa$. Then, the set $T = \{u\in N\setminus \overline{D}: |N(u)\cap S| = |N(u)| \}$ has fewer than $\kappa$ vertices.
\end{center}

\begin{proof}[Proof of the claim]
Define the sets $A = \{v\in S : |N(v)\cap T|< |N(v)| = \kappa\}$ and $B = \displaystyle \{u\in T: u \in N(x)\text{ for some }x\in A\} = \bigcup_{x\in A}(N(x)\cap T)$. Once $A\subset S$, we have that $|A|<\kappa$. Then, $|B|< \kappa$ by definition of $A$ and the regularity of $\kappa$. Noticing that $(S\setminus A, T\setminus B)$ is a bipartite pair, we conclude that $T\setminus B = \emptyset$ by the fact that the maximal bipartite pair $(F_0,F_1)$ is already colored. Therefore, $|T| = |B|<\kappa$.  
\end{proof}

To construct the desired coloring of $G[V\setminus \overline{D}]$, fix a non-injective enumeration $\{v_{\alpha}\}_{\alpha < \kappa}$ of $M\setminus \overline{D}$ such that every member is presented $\kappa$ times, i.e., $|\{\alpha < \kappa: v_{\alpha} = v\}| = \kappa$ for every $v\in M\setminus \overline{D}$. Then, we will define recursively an unfriendly partition $c: V\setminus \overline{D}\to 2$ according to the following algorithm:

\begin{enumerate}
    \item We first define $c(v_0) = 0$. If $v_0$ has a neighbor $v\in M\setminus \overline{D}$, we define $c(v) = 1$. If not, once $|N(v_0)\setminus \overline{D}| = |N(v_0)| = \kappa$, there is a component $C$ of $G[N\setminus \overline{D}]$ where $v_0$ has a neighbor $v$. For every $u\in N(C)\cap M \setminus (\overline{D}\cup \{v_0\})$, we set $c(u) = 0$. By sewing $c$ as a coloring defined in $G[C\cup N(C)\setminus \overline{D}]$, we extend it to some vertices $\overline{C}\subset C$ by taking its closure. Then, a vertex $u\in C\setminus \overline{C}$ satisfies $|N(u)\cap (C\setminus \overline{C})| = |N(u)|\in \mathcal{K}$. Hence, by hypothesis, we may extend $c$ to the whole component $C$ by adjoining an unfriendly partition of $G[C\setminus \overline{C}]$. Up to changes of the colors of all the vertices of $(C\cup N(C))\setminus (\overline{D}\cup \{v_0\})$, we can assume that $c(v) = 1$. This finishes the first iteration of the algorithm. Note that, besides those vertices of $\overline{D}$, we have colored a component of $G[N\setminus \overline{D}]$ and less than $\kappa$ vertices of $M$, as the first claim guarantees.
    \item For some ordinal $\alpha > 0$, denote by $S_{\alpha}\subset M\setminus \overline{D}$ the vertices of $V\setminus \overline{D}$ of degree $\kappa$ that we have colored so far by this algorithm. Since $\alpha < \kappa$ and $\kappa$ is a regular cardinal, by transfinite induction we can suppose that $|S_{\alpha}|<\kappa$. Then, first consider the case that $c(v_{\alpha})$ is already defined. If $v_{\alpha}$ has a neighbor $v\in M\setminus (\overline{D}\cup S_{\alpha})$, define $c(v) = 1-c(v_{\alpha})$. If not, then $v_{\alpha}$ has $\kappa$ neighbors as elements of $N\setminus \overline{D}$. Moreover, once each connected component of $G[N\setminus \overline{D}]$ has cardinality less than $\kappa$, $v$ has neighbors in $\kappa$ distinct of them. Regarding that less than $\kappa$ of such components were colored so far, by the last claim we may find a connected component $C$ of $G[N\setminus \overline{D}]$ such that $|N(u)\cap S_{\alpha}|< |N(u)|$ for every vertex $u$ of $C$. In other words, every member of $C$ has in $V\setminus \overline{D}$ as many neighbors as in $D_{\alpha}: = V\setminus (\overline{D}\cup S_{\alpha})$. Similarly to the procedure of the first iteration, define $c(u) = 0$ for each $u\in N(C)\cap M \cap D_{\alpha}$. Regarding $c$ as a partial coloring of the graph $G[C\cup (N(C)\cap D_{\alpha})]$, we define $c$ on some vertex subset $\overline{C}$ of $C$ by taking its closure. Hence, every remaining vertex $u\in C\setminus \overline{C}$ is such that $|N(u)\cap (C\setminus \overline{C})| = |N(u)|\in \mathcal{K}$. Now, the hypothesis can be employed to extend $c$ to $G[C\setminus \overline{C}]$ by adjoining an unfriendly partition of such subgraph. Again, up to changing the colors of all the vertices of $C\cup (N(C)\cap D_{\alpha})$, we can assume that $c(v) = 1-c(v_{\alpha})$. Finally, if $c(v_{\alpha})$ was not defined, just set $c(v_{\alpha}) =0$.      
\end{enumerate}

At the end of this transfinite process, $c$ is defined for every vertex of $M$ and for some connected components of $G[N]$. In such components, this is an unfriendly coloring: each vertex was colored by some closure throughout the procedure or by an unfriendly partition given by the hypothesis. A vertex of $M\setminus \overline{D}$, instead, received a neighbor of opposite color in $\kappa$ iterations before the moment his value by $c$ has been set, according to its indices at the enumeration $\{v_{\alpha}\}_{\alpha < \kappa}$. Therefore, every member of $M\setminus \overline{D}$ has $\kappa$ neighbors of opposite color, verifying that $c$ is unfriendly in those vertices.

It remains, however, to define colors for vertices of some components of $G[N\setminus \overline{D}]$ that were not analyzed by the steps above. If $C$ is one of those components, as before we will see the coloring $c$ as a partially defined coloring at $G[C\cup N(C)]$. In fact, it is only defined in $N(C)\cap M$. By taking its closure within this component, $c$ is defined for some vertex set $\overline{C}\subset C$ and, therefore, it is unfriendly in such vertices. Again, every member $u\in C\setminus \overline{C}$ satisfies $|N(u)\cap (C\setminus\overline{C})| = |N(u)|\in \mathcal{K}$. Hence, the hypothesis can be applied to define $c$ as an unfriendly partition for $G[C\setminus \overline{C}]$, coloring the entire component $C$.    

\end{proof}

\begin{corol}
    \label{c2}
The statement $\varkappa = \mathfrak{c}^{+\omega}$ is independent from $\mathrm{ZFC}+\aleph_{\omega}<\mathfrak{c}$.
\end{corol}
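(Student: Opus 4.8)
The plan is to establish both halves of the independence by exhibiting two models of $\mathrm{ZFC}+\aleph_\omega<\mathfrak{c}$. For the consistency of $\varkappa \neq \mathfrak{c}^{+\omega}$ nothing new is required: in a model of $\mathrm{ZFC}+\aleph_\omega<\mathfrak{c}$ carrying a $p$-point generated by a family of size $\aleph_1$ (shown consistent with this theory in the introduction), Corollary \ref{desigualdadetrivial} together with Theorem \ref{consistente} forces $\varkappa=\aleph_\omega$, exactly as in the proof of Corollary \ref{c1}. Since $\aleph_\omega<\mathfrak{c}<\mathfrak{c}^{+\omega}$ holds there, we get $\varkappa\neq\mathfrak{c}^{+\omega}$, so the negation is consistent.

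The substantial half is the consistency of $\varkappa=\mathfrak{c}^{+\omega}$, and here I would work in $\mathrm{ZFC}+\mathrm{MA}+\aleph_\omega<\mathfrak{c}$, recalling that this theory is consistent and that $\mathrm{MA}$ makes $\mathfrak{c}$ regular. The inequality $\varkappa\leq\mathfrak{c}^{+\omega}$ comes for free from Theorem \ref{contraexemplozfc} with $\lambda=\omega$, so the whole task reduces to the reverse bound $\varkappa\geq\mathfrak{c}^{+\omega}$: under $\mathrm{MA}$, every graph all of whose vertices have infinite degree and with fewer than $\mathfrak{c}^{+\omega}$ vertices admits an unfriendly partition.

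To prove this I would bootstrap Proposition \ref{martin} through Theorem \ref{adaptado}. Take $\mathcal{K}$ to be the family of all infinite cardinals below $\mathfrak{c}$ and first check the base case: every graph $G$ whose vertices have degree in $\mathcal{K}$ admits an unfriendly partition. Indeed, since $\mathfrak{c}$ is regular, a connectivity argument identical to the first Claim in the proof of Theorem \ref{adaptado} shows that each connected component of $G$ has fewer than $\mathfrak{c}$ vertices (the degree of a vertex inside its component equals its degree in $G$, so all degrees stay infinite); Proposition \ref{martin} then colors each component, and because the unfriendliness condition at a vertex depends only on the colors in its own component, the union of these colorings is an unfriendly partition of $G$. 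With the base case secured, the cardinals $\mathfrak{c},\mathfrak{c}^{+},\mathfrak{c}^{++},\dots$ are all regular ($\mathfrak{c}$ by $\mathrm{MA}$, the successors automatically), so Theorem \ref{adaptado} applies repeatedly and, by induction on $n$, every graph whose vertices have degree an infinite cardinal $\leq\mathfrak{c}^{+n}$ admits an unfriendly partition. Any graph with all degrees infinite and $|V|<\mathfrak{c}^{+\omega}$ satisfies $|V|\leq\mathfrak{c}^{+n}$ for some $n$, whence all its degrees are infinite cardinals $\leq\mathfrak{c}^{+n}$ and the inductive conclusion applies. This gives $\varkappa\geq\mathfrak{c}^{+\omega}$, hence $\varkappa=\mathfrak{c}^{+\omega}$ in this model, finishing the independence.

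The step I expect to be the main obstacle is the base case, where Proposition \ref{martin} (stated in terms of the cardinality $|V|<\mathfrak{c}$) must be converted into the degree-indexed hypothesis that Theorem \ref{adaptado} consumes (stated in terms of the admissible degree set $\mathcal{K}$). The bridge is the regularity of $\mathfrak{c}$, which turns a bound on all degrees into a bound on every connected component; once that componentwise reduction is in place, the transfinite iteration of Theorem \ref{adaptado} is routine.
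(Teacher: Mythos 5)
Your proposal is correct and follows essentially the same route as the paper: the $p$-point model of Theorem \ref{consistente} together with Corollary \ref{desigualdadetrivial} gives the consistency of $\varkappa\neq\mathfrak{c}^{+\omega}$, while the model of $\mathrm{ZFC}+\mathrm{MA}+\aleph_{\omega}<\mathfrak{c}$ with Proposition \ref{martin}, iterated applications of Theorem \ref{adaptado}, and the upper bound from Theorem \ref{contraexemplozfc} gives the consistency of $\varkappa=\mathfrak{c}^{+\omega}$. The one point where you go beyond the paper's write-up is the componentwise reduction in your base case, which converts the cardinality hypothesis $|V|<\mathfrak{c}$ of Proposition \ref{martin} into the degree-indexed hypothesis that Theorem \ref{adaptado} actually consumes (using regularity of $\mathfrak{c}$ to bound each connected component); the paper performs this conversion silently, and your explicit argument is precisely the missing justification.
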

\begin{proof}
By Martin's Axiom, that is consistent with $\mathrm{ZFC}+\aleph_{\omega}<\mathfrak{c}$, every graph without vertices of finite degree and less than $\mathfrak{c}$ vertices admits an unfriendly partition, as guaranteed by Proposition \ref{martin}. Since $\mathfrak{c}$ is a regular cardinal under Martin's Axiom, by Theorem \ref{adaptado} we actually have that graphs with at most $\mathfrak{c}$ vertices, all them of infinite degree, have unfriendly partitions. Denote by $\alpha > \omega$ the ordinal such that $\mathfrak{c} = \aleph_{\alpha}$. Observing that successor cardinals are regular, Theorem \ref{adaptado} also shows that every graph with at most $\aleph_{\alpha+n}$ vertices, all them of infinite degree, has an unfriendly partition. This means that a graph without unfriendly partitions and vertices of finite degree must have at least $\mathfrak{c}^{+\omega} = \aleph_{\alpha+\omega}$ vertices. Concluding that the statement $\varkappa = \mathfrak{c}^{+\omega}$ is consistent with $\mathrm{ZFC}+\aleph_{\omega}<\mathfrak{c}$, the construction of Milner and Shelah given by Theorem \ref{contraexemplozfc} has such size.

On the other hand, the axioms used by Milner and Shelah to prove Theorem \ref{consistente} are also consistent with the statement $\aleph_{\omega}<\mathfrak{c}$. So it is consistent with $\mathrm{ZFC}+\aleph_{\omega}<\mathfrak{c}$ that there is a graph with less than $\mathfrak{c}^{+\omega}$ vertices, all of them of infinite degree, that does not admit an unfriendly partition. That is, the statement $\varkappa \neq \mathfrak{c}^{+\omega}$ is also consistent with $\mathrm{ZFC}+\aleph_{\omega}<\mathfrak{c}$.

Hence, the statement $\varkappa = \mathfrak{c}^{+\omega}$ is independent of $\mathrm{ZFC}+\aleph_{\omega}<\mathfrak{c}$.
\end{proof}

Therefore, together, Corollaries \ref{c1} and \ref{c2} prove Theorem \ref{nosso}. In other words, if we want to forbid vertices of finite degree, the minimum size of a graph without unfriendly partitions can't be precisely determined, even knowing that the cardinals $\aleph_{\omega}$ and $\mathfrak{c}^{+\omega}$ are distinct.

\section{Acknowledgments}
\paragraph{}
Both authors thank the financial support of FAPESP. The first named author was supported through grant numbers 2019/22344-0 and 2023/00595-6, while the second named author was supported through grant number 2021/13373-6.  

\bibliographystyle{plain}
\bibliography{UnfriendlyPartitionsWhenAvoidingFiniteDegree}

\end{document}